\documentclass[11pt]{article}

\usepackage[T1]{fontenc}
\usepackage[utf8]{inputenc}
\usepackage{lmodern}
\usepackage[margin=1in]{geometry}
\usepackage{microtype}

\usepackage{amsmath,amssymb,amsthm,mathtools}
\usepackage{bm}

\usepackage{graphicx}
\graphicspath{{figures/}}
\usepackage{booktabs}
\usepackage{array}

\usepackage[shortlabels]{enumitem}
\usepackage[numbers,sort&compress]{natbib}
\usepackage{xcolor}
\usepackage[colorlinks=true,linkcolor=blue,citecolor=blue,urlcolor=blue]{hyperref}
\usepackage[capitalise,noabbrev]{cleveref}

\theoremstyle{plain}
\newtheorem{theorem}{Theorem}[section]
\newtheorem{lemma}[theorem]{Lemma}
\newtheorem{proposition}[theorem]{Proposition}
\newtheorem{corollary}[theorem]{Corollary}

\theoremstyle{definition}

\newtheorem{example}[theorem]{Example}

\theoremstyle{remark}

\newcommand{\R}{\mathbb{R}}

\newcommand{\E}{\mathbb{E}}

\DeclareMathOperator{\spanop}{span}
\DeclareMathOperator{\ran}{ran}
\DeclareMathOperator{\diag}{diag}

\newcommand{\ip}[2]{\langle #1,#2\rangle}
\newcommand{\norm}[1]{\lVert #1\rVert}
\newcommand{\cn}{c_n}
\newcommand{\Qop}{\mathcal{Q}}
\newcommand{\one}{\mathbf{1}}
\newtheorem{conjecture}[theorem]{Conjecture}
\hypersetup{pdftitle={Sharpness and Rigidity of the Two-Factor Recht--Re Lower Bound}}

\title{Sharpness and Rigidity of the\\Two-Factor Recht--R\'e Lower Bound}
\author{Jinze Zhao\\University of California, San Diego\\\texttt{jiz419@ucsd.edu}}
\date{}
\begin{document}

\maketitle

\begin{abstract}
Products of positive semidefinite matrices arise when successive steps of a
randomized optimization method use different data points. Their average can
have a negative eigenvalue even though every individual matrix is positive
semidefinite. Lai and Lim proved at ICML 2020 that
\(\sum_{i\ne j}A_iA_j\succeq-\frac14 n(n-1)I\) whenever
\(A_i\succeq0\) and \(\sum_i A_i\preceq nI\), and conjectured that the
constant is sharp for every \(n\ge2\).
We give an explicit family of \(n\) rank-one matrices in dimension \(n\)
that attains this bound for every \(n\), proving that sharpness statement.
The construction compresses one direction of an orthonormal basis by
a factor \(1/\sqrt2\).
We also give a short, self-contained proof of the known inequality and
classify all equality cases: every extremizer contains a common invariant
\(n\)-dimensional block equivalent to the construction.
Consequently, dimension \(n\) is necessary for equality, and positive
definite matrices approach but never attain the lower endpoint.
Finally, strongly convex quadratic objectives realize the extremal negative
eigenvalue as a two-step expected gradient-descent update.

\end{abstract}


\section{Introduction}
\label{sec:introduction}
When a gradient method minimizes a quadratic objective, each step multiplies
the current error by a matrix. If different steps use different summands of a
finite-sum objective, the resulting matrix product depends on the order of
the selected indices. Even two steps already exhibit a phenomenon absent
from scalar arithmetic: an average of products of positive semidefinite
matrices can have a negative eigenvalue. Determining how negative that
eigenvalue can be is a precise extremal problem underlying this matrix
approach to sampling in optimization.

Recht and R\'e introduced a family of noncommutative
arithmetic--geometric mean questions at COLT 2012, motivated by
sampling with and without replacement in randomized optimization
\citep{recht2012}. Lai and Lim subsequently disproved the general
conjecture and obtained a sharper two-factor inequality
\citep[Theorem~4]{lai2020}.
Under the normalization \(A_1+\cdots+A_n\preceq nI\), their lower bound is
\(-n(n-1)I/4\). After their theorem they supplied an equality example for
\(n=2\) and conjectured sharpness for every \(n\ge2\).
That specific sharpness question is the subject of this paper.
It is stated on page~5616 of the ICML proceedings article, and appears
again in Lai's June 2023 dissertation \citep[p.~26]{lai2023}.
Thus the question dates to 2020, with an explicit later record in 2023.

The answer has a simple form. Let \(e_1,\ldots,e_n\) be the standard basis
of \(\R^n\). Write \(e=n^{-1/2}(1,\ldots,1)^T\), and set
\[
 P=ee^T,\qquad C=I-\left(1-\frac1{\sqrt2}\right)P,\qquad
 b_i=Ce_i,\qquad A_i=n\,b_i b_i^T.
\]
The operator \(C\) fixes every vector perpendicular to \(e\) and multiplies
\(e\) by \(1/\sqrt2\). Consequently
\[
 \sum_i A_i=nI-\frac n2P.
\]
The vectors \(b_i\) have equal squared length \(1-1/(2n)\).
This reduces the entire product sum to a polynomial in \(\sum_i A_i\),
and gives
\[
 \sum_{i\ne j}A_iA_j
   =\frac n2(I-P)-\frac{n(n-1)}4P.
\]
The claimed lower endpoint is therefore attained exactly on \(e\), for
every number \(n\) of matrices.

Our second result describes why this construction works.
Equality forces an \(n\)-dimensional subspace on which the matrices have
exactly the preceding form, up to an orthogonal change of coordinates.
The rest of the space may carry any feasible family of matrices.
This identifies the smallest possible dimension of an extremizer and
shows that equality requires singular matrices.
The proof uses an explicit embedding of positive semidefinite matrices
as compressions of coordinate projections; all details are included.

The paper gives the equality construction, a complete description of
equality cases, and a concrete quadratic-optimization interpretation.
The universal lower inequality itself is due to Lai and Lim; we provide
an elementary proof to make the argument self-contained.
The mathematical scope is the two-factor lower endpoint and its
extremizers. Higher product lengths and algorithmic convergence rates
require separate analyses.



\section{The extremal problem}
\label{sec:problem}

All matrices are real and finite-dimensional. For a symmetric matrix \(H\),
the notation \(H\succeq0\) means \(x^THx\ge0\) for every vector \(x\).
The relation \(H\preceq K\) means \(K-H\succeq0\).
The identity matrix on the space in use is denoted by \(I\).
Vector norms are Euclidean, matrix norms are the induced operator norms,
and \(\lambda_{\min}(H)\) denotes the smallest eigenvalue of a symmetric
matrix.

Fix integers \(n\ge2\) and \(d\ge1\).
We work first with matrices \(B_1,\ldots,B_n\in\R^{d\times d}\) satisfying
\begin{equation}
 B_i\succeq0,\qquad S:=\sum_{i=1}^n B_i\preceq I.
 \label{eq:feasible}
\end{equation}
Define the ordered product sum and the proposed lower constant by
\begin{equation}
 \Qop(B):=\sum_{\substack{1\le i,j\le n\\i\ne j}}B_iB_j,
 \qquad
 \cn:=\frac{n-1}{4n}.
 \label{eq:Q}
\end{equation}
There are \(n(n-1)\) terms in this sum. Reversing the two indices gives
\(\Qop(B)^T=\Qop(B)\), so its eigenvalues are real.
Expanding \(S^2\), with the order of multiplication preserved, gives
\begin{equation}
 \Qop(B)=S^2-\sum_{i=1}^n B_i^2.
 \label{eq:expand}
\end{equation}

The change of variables \(A_i=nB_i\) takes
\eqref{eq:feasible} to the normalization in Lai and Lim's theorem.
Under this change,
\[
 \sum_i A_i\preceq nI,\qquad
 \sum_{i\ne j}A_iA_j=n^2\Qop(B),\qquad
 n^2\cn=\frac{n(n-1)}4.
\]
Thus the sharpness question can be stated without any ambiguity in scaling.

\begin{conjecture}[Lai--Lim sharpness question, normalized]
\label{conj:sharp}
For every integer \(n\ge2\), there exist a dimension \(d\) and matrices
satisfying \eqref{eq:feasible} for which
\(\lambda_{\min}(\Qop(B))=-\cn\).
\end{conjecture}

This formulation paraphrases the sharpness assertion immediately following
Theorem~4 in \citet{lai2020}. The dimension is allowed to depend on \(n\).
No assertion of equality in every fixed dimension is part of this question.

\begin{theorem}[Sharp lower endpoint]
\label{thm:main}
For every \(n\ge2\), every \(d\ge1\), and every feasible family,
\begin{equation}
 -\cn I\preceq\Qop(B)\preceq\frac{n-1}{n}I.
 \label{eq:both}
\end{equation}
There is an explicit feasible family of \(n\) rank-one matrices in
dimension \(n\) for which
\begin{equation}
 \Qop(B)=\frac1{2n}(I-P)-\cn P,
 \label{eq:extremalQ}
\end{equation}
where \(P\) is a rank-one orthogonal projection.
In particular,
\begin{equation}
 \inf_{\substack{d\ge1,\ B_1,\ldots,B_n\succeq0\\
                  \sum_i B_i\preceq I}}
       \lambda_{\min}(\Qop(B))
       =-\frac{n-1}{4n},
 \label{eq:optimum}
\end{equation}
and the infimum is attained in dimension \(n\).
\end{theorem}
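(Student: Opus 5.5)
Three things need proof: the upper bound, the lower bound, and the explicit family. All three start from the identity \eqref{eq:expand}, \(\Qop(B)=S^2-\sum_i B_i^2\).

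\textbf{Upper bound.} Each \(B_i^2\succeq0\), so it suffices to bound \(S^2\) from above and \(\sum_i B_i^2\) from below. By Cauchy--Schwarz, \(\norm{Sx}^2=\norm{\sum_i B_ix}^2\le n\sum_i\norm{B_ix}^2\), that is, \(\sum_i B_i^2\succeq\frac1n S^2\). Hence \(\Qop(B)\preceq\frac{n-1}{n}S^2\preceq\frac{n-1}{n}I\). The last step uses \(0\preceq S\preceq I\), which gives \(S^2\preceq S\preceq I\).

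\textbf{Lower bound.} Fix a unit vector \(x\) and set \(v_i=B_ix\) and \(s=Sx=\sum_i v_i\). Then \(x^T\Qop(B)x=\norm{s}^2-\sum_i\norm{v_i}^2\), so we need \(\sum_i\norm{v_i}^2\le\norm{s}^2+\cn\). The constraints give \(B_i^2\preceq\norm{B_i}B_i\) and \(\norm{B_i}\le1\), so \(B_i^2\preceq B_i\). This yields \(\sum_i\norm{v_i}^2\le x^TSx\), hence \(x^T\Qop(B)x\ge\norm{Sx}^2-x^TSx\). With \(t=x^TSx\), Cauchy--Schwarz gives \(\norm{Sx}^2\ge t^2\), so the bound \(t^2-t\ge-\tfrac14\) follows. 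This is weaker than \(-\cn\) by the factor \(\frac{n-1}{n}\).

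The main obstacle is to recover that factor. The plan is to interpolate between the two estimates for \(\sum_i B_i^2\). The inequality \(B_i^2\preceq B_i\) is tight only when each \(B_i\) is a projection. But if all the \(v_i\) are large, the Cauchy--Schwarz lower bound on \(\norm{s}^2\) becomes relevant. Concretely, I would aim to prove \(\sum_i B_i^2\preceq S-\frac1n S(I-S)\)-type bounds, or work directly with vectors. Write \(a=\sum_i\norm{v_i}^2\le t\). We need \(a-\norm{s}^2\le\frac{n-1}{4n}\).

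The key extra input is \(\ip{v_i}{x}=x^TB_ix\ge\norm{v_i}^2\), using \(B_i\succeq B_i^2\) componentwise. Setting \(w_i=v_i-\tfrac12x\), this reads \(\norm{w_i}^2\le\tfrac14\). Now expand \(\norm{s}^2=\norm{\sum_i w_i+\tfrac n2x}^2\) and \(\sum_i\norm{v_i}^2=\sum_i\norm{w_i}^2+\ip{\sum_i w_i}{x}+\tfrac n4\). Put \(W=\sum_i w_i\). Then
\[a-\norm{s}^2=\sum_i\norm{w_i}^2-\norm{W}^2-(n-1)\ip{W}{x}-\frac{n(n-1)}4 .\]
The constraint \(S\preceq I\) gives \(\ip{s}{x}\le1\), i.e.\ \(\ip{W}{x}\le1-\tfrac n2\). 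Combining this with \(\sum_i\norm{w_i}^2\le\tfrac n4\) and \(\norm{W}^2\ge\ip{W}{x}^2\), maximizing the resulting scalar quadratic in \(u=\ip{W}{x}\) over \(u\) should give exactly \(\frac{n-1}{4n}\). The quadratic is \(\tfrac n4-u^2-(n-1)u-\tfrac{n(n-1)}4\). Its unconstrained maximum lies at \(u=-\tfrac{n-1}2\), which violates \(u\le1-\tfrac n2\) only in edge cases, so the constraint and the bounds must be coupled more carefully. If this crude bookkeeping fails, I would fall back on a scalar-weighted Cauchy--Schwarz, \(\norm{W}^2\ge\frac1n(\ldots)\), and check the constant. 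This optimization is the step I expect to need the most care.

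\textbf{Construction.} Use \(B_i=b_ib_i^T\) with \(b_i=Ce_i\), so that \(S=CC^T=C^2=I-\tfrac12P\). Because \(\norm{b_i}^2=1-\frac1{2n}\), we get \(B_i^2=(1-\frac1{2n})B_i\). Then \(\Qop(B)=S^2-(1-\frac1{2n})S\), and evaluating on the eigenspaces of \(P\) gives \eqref{eq:extremalQ}. On \(e\) the value is \(\tfrac14-\tfrac12(1-\tfrac1{2n})=-\cn\), and on \(e^\perp\) it is \(1-(1-\tfrac1{2n})=\tfrac1{2n}\). Together with the lower bound, this gives \eqref{eq:optimum}, with the infimum attained in dimension \(n\).
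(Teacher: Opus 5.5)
Your upper bound and your construction are correct and match the paper. Your Cauchy--Schwarz step $\sum_iB_i^2\succeq\frac1nS^2$ is the quadratic-form version of $n\sum_iB_i^2-S^2=\sum_{i<j}(B_i-B_j)^2\succeq0$.

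The lower bound, however, has a genuine gap, and the vector bookkeeping you outline cannot close it. At the vector $x$ you use only two kinds of constraint: the per-index inequalities $\norm{B_ix}^2\le x^TB_ix$ (your $\norm{w_i}\le\frac12$), and $S\preceq I$. These admit a configuration with value exactly $\frac14$. Take $v_i=\frac1{2n}x+y_i$, where the $y_i\perp x$ are the vertices of a centered regular simplex with $\norm{y_i}^2=\frac{2n-1}{4n^2}$. Then $\norm{v_i}^2=\ip{v_i}{x}=\frac1{2n}$, and $s=\frac12x$ satisfies $\norm{s}^2\le\ip{s}{x}\le1$. Yet $\sum_i\norm{v_i}^2-\norm{s}^2=\frac14$. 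So no refinement of the optimization over $u=\ip{W}{x}$, weighted Cauchy--Schwarz or otherwise, can reach $\cn$ from these inputs.

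Two further points confirm this. The vertex $u=-\frac{n-1}2$ always satisfies $u\le1-\frac n2$, so your crude bound returns exactly $\frac14$ for every $n$, not only in edge cases. And at the true extremizer the per-index constraint is slack: $\norm{B_i^\star e}^2=\frac1{2n}-\frac1{4n^2}<\frac1{2n}=e^TB_i^\star e$.

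The missing idea is a pairwise inequality that couples two matrices: if $E,F\succeq0$ and $E+F\preceq I$, then $(E-F)^2\preceq E+F$ (Lemma~\ref{lem:diff}). This does not follow from $E^2\preceq E$ and $F^2\preceq F$, which only give $(E-F)^2=2E^2+2F^2-(E+F)^2\preceq2(E+F)-(E+F)^2$.

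The paper proves it as follows. Write $T=E+F$ and $D=E-F$, and note $-T\preceq D\preceq T$. Then $K=T^{-1/2}DT^{-1/2}$ on $(\ker T)^\perp$ satisfies $K^2\preceq I$. Using $T\preceq I$, one gets $D^2=T^{1/2}KTKT^{1/2}\preceq T^{1/2}K^2T^{1/2}\preceq T$.

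Apply this to $B_i,B_j$, which is legitimate because $B_i+B_j\preceq S\preceq I$, and sum over $i<j$. This gives $n\sum_iB_i^2-S^2=\sum_{i<j}(B_i-B_j)^2\preceq(n-1)S$. That is exactly the bound $\sum_iB_i^2\preceq S-\frac1nS(I-S)$ you guessed. Then $\Qop(B)=S^2-\sum_iB_i^2\succeq\frac{n-1}n(S^2-S)\succeq-\cn I$. This is the paper's slack identity (Lemma~\ref{lem:slack}) with the nonnegative term $\frac1n\sum_{i<j}D_{ij}$ dropped.
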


The two inequalities in \eqref{eq:both} are known results of
\citet{lai2020}. Their proof is included next.
The equality construction in Section~\ref{sec:construction} proves
Conjecture~\ref{conj:sharp}.
The equality classification in Section~\ref{sec:rigidity} will show that
dimension \(n\) is also necessary.

\section{A self-contained proof of the universal bounds}
\label{sec:bound}

The main elementary fact is that the difference of two positive
semidefinite parts of a contraction has a controlled square.
No commutativity is required.

\begin{lemma}[A difference-square inequality]
\label{lem:diff}
If \(E,F\succeq0\) and \(E+F\preceq I\), then
\begin{equation}
 (E-F)^2\preceq E+F.
 \label{eq:diff}
\end{equation}
\end{lemma}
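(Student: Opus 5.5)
Set \(G:=E+F\), so that \(0\preceq G\preceq I\), and write \(D:=E-F\). Because \(E,F\succeq0\), we have \(-G\preceq D\preceq G\). The plan is to use the embedding alluded to in the introduction: realize \(G\) and \(D\) as compressions of commuting objects in a larger space, where the inequality becomes scalar.

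\textbf{Factorization step.} From \(-G\preceq D\preceq G\) we get \(G\pm D\succeq0\), that is, \(2E\succeq0\) and \(2F\succeq0\). Write \(E=X^TX\) and \(F=Y^TY\), for instance with \(X=E^{1/2}\) and \(Y=F^{1/2}\). Then define the block column
\[
V=\begin{pmatrix}X\\ Y\end{pmatrix},\qquad J=\begin{pmatrix}I&0\\0&-I\end{pmatrix},
\]
so that \(V^TV=E+F=G\) and \(V^TJV=E-F=D\).

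\textbf{Compression step.} Since \(G\preceq I\), the map \(V\) is a contraction, and hence \(VV^T\preceq I\). This gives
\[
D^2=V^TJ\,(VV^T)\,JV\preceq V^TJ\,I\,JV=V^TJ^2V=V^TV=G.
\]
The middle inequality holds because congruence by \(JV\) preserves the Loewner order, and \(J^2=I\). The result is \((E-F)^2\preceq E+F\), which is exactly \eqref{eq:diff}.

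\textbf{Checking the only nontrivial point.} The one step needing care is \(VV^T\preceq I\). It follows because \(VV^T\) and \(V^TV\) have the same nonzero eigenvalues. Since \(V^TV=G\preceq I\), those eigenvalues lie in \([0,1]\), so \(\norm{V}\le1\). Everything else is direct algebra, so I expect no real obstacle beyond this norm step. No commutativity of \(E\) and \(F\) is ever used, in line with the remark preceding the lemma.
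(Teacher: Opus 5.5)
Your proof is correct, and it takes a genuinely different route from the paper's proof of this lemma. The paper works in $\R^d$ throughout. It shows that $E-F$ annihilates $\ker(E+F)$ and restricts to the orthogonal complement. There it normalizes to $K=T^{-1/2}DT^{-1/2}$ with $-I\preceq K\preceq I$, and chains $KTK\preceq K^2$ with $T^{1/2}K^2T^{1/2}\preceq T$.

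Your dilation $Vz=(E^{1/2}z,F^{1/2}z)$, $J=\diag(I,-I)$ needs no inverse, so the singular case needs no separate treatment. It also produces an exact slack formula $E+F-(E-F)^2=V^TJ(I-VV^T)JV$, a congruence of the positive semidefinite matrix $I-VV^T$. The paper uses this same mechanism only later, in Lemma~\ref{lem:embedding}. There $B_0=I-S$ is appended so that $V$ becomes an isometry and $VV^T$ becomes the projection $R$, which gives \eqref{eq:projectiongap} for the equality analysis. A contraction suffices for the inequality itself, so your route would let one argument serve both Sections~\ref{sec:bound} and~\ref{sec:rigidity}.

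The paper's normalization argument, in turn, uses only the order relations $-T\preceq D\preceq T$ and $0\preceq T\preceq I$, without an auxiliary space. The two proofs have the same generality, since any such pair arises as $D=E-F$, $T=E+F$ with $E=(T+D)/2$, $F=(T-D)/2$. The one superfluous line in your write-up is the deduction of $2E\succeq0$ and $2F\succeq0$ from $-G\preceq D\preceq G$, which only recovers the hypothesis.
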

\begin{proof}
Put \(T=E+F\) and \(D=E-F\).
Both \(T+D=2E\) and \(T-D=2F\) are positive semidefinite; hence
\(-T\preceq D\preceq T\).

We first justify using an inverse when \(T\) might be singular.
If \(z\in\ker T\), then
\(0=z^TTz=z^TEz+z^TFz\).
Each term is nonnegative, so both vanish.
For a positive semidefinite matrix \(E\),
\(z^TEz=\norm{E^{1/2}z}^2=0\) implies \(Ez=0\).
The same argument applies to \(F\).
Consequently \(Dz=0\).
Since \(D\) is symmetric, the orthogonal complement of \(\ker T\)
is invariant under \(D\).

Restrict now to that orthogonal complement.
The restriction of \(T\) is positive definite, and the symmetric matrix
\[
 K=T^{-1/2}DT^{-1/2}
\]
satisfies \(-I\preceq K\preceq I\) by congruence of the previous order
inequalities. Its eigenvalues lie in \([-1,1]\), so \(K^2\preceq I\).
Because \(T\preceq I\), congruence by \(K\) gives
\(KTK\preceq K^2\). Therefore
\[
 D^2
 =T^{1/2}KTKT^{1/2}
 \preceq T^{1/2}K^2T^{1/2}
 \preceq T.
\]
Both sides vanish on \(\ker T\).
The inequality therefore holds on the original space.
\end{proof}

For a feasible family, define for every \(i<j\)
\begin{equation}
 D_{ij}:=B_i+B_j-(B_i-B_j)^2.
 \label{eq:Dij}
\end{equation}
Since \(B_i+B_j\preceq S\preceq I\),
Lemma~\ref{lem:diff} gives \(D_{ij}\succeq0\).
A direct expansion supplies an exact positive decomposition of the
slack in the lower bound.

\begin{lemma}[The lower-bound slack]
\label{lem:slack}
For every feasible family,
\begin{equation}
 \Qop(B)+\cn I
   =\frac{n-1}{n}\left(S-\frac12I\right)^2
     +\frac1n\sum_{i<j}D_{ij}.
 \label{eq:slack}
\end{equation}
\end{lemma}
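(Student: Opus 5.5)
This identity is a direct algebraic expansion, so I will expand both sides as polynomials in the \(B_i\) and compare. The order of multiplication must be kept throughout, but only \(S\), \(B_i^2\), \(B_i\) and the symmetric sums \(B_iB_j+B_jB_i\) will appear.

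\emph{Step 1: the pair sum.} Since \(D_{ij}=B_i+B_j-B_i^2-B_j^2+B_iB_j+B_jB_i\), summing over \(i<j\) uses two counts. Each index lies in exactly \(n-1\) pairs. The symmetric cross terms over \(i<j\) add up to the full ordered sum \(\Qop(B)\). This gives
\[
 \sum_{i<j}D_{ij}=(n-1)S-(n-1)\sum_i B_i^2+\Qop(B).
\]

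\emph{Step 2: the square.} Expanding \((S-\tfrac12 I)^2=S^2-S+\tfrac14 I\) and using \eqref{eq:expand} to write \(S^2=\Qop(B)+\sum_iB_i^2\), I get
\[
 \frac{n-1}{n}\Bigl(S-\tfrac12I\Bigr)^2=\frac{n-1}{n}\Bigl(\Qop(B)+\sum_iB_i^2-S\Bigr)+\frac{n-1}{4n}I .
\]

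\emph{Step 3: combine.} Adding \(\tfrac1n\) times the pair sum from Step 1 to the expression from Step 2:
\begin{itemize}
 \item the \(S\) terms cancel, since \(-\tfrac{n-1}{n}+\tfrac{n-1}{n}=0\);
 \item the \(\sum_iB_i^2\) terms cancel in the same way;
 \item the \(\Qop(B)\) coefficient is \(\tfrac{n-1}{n}+\tfrac1n=1\);
 \item the constant is \(\cn I\).
\end{itemize}
The result is \(\Qop(B)+\cn I\), which is exactly \eqref{eq:slack}.

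There is no real obstacle here. The only point needing care is the bookkeeping in Step 1: checking that each \(B_i\) is counted \(n-1\) times, and that \(\sum_{i<j}(B_iB_j+B_jB_i)=\Qop(B)\) holds without any commutativity assumption. Feasibility is not used in the identity itself. It enters only afterwards, through \(D_{ij}\succeq0\) from Lemma~\ref{lem:diff}, to obtain the lower bound in \eqref{eq:both}.
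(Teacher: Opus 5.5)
Your proof is correct and follows essentially the same route as the paper: sum the $D_{ij}$ by counting pair memberships and symmetric cross terms, expand the square, and apply \eqref{eq:expand}. The only difference is bookkeeping. You keep the pair sum in terms of $\Qop(B)$ and substitute for $S^2$ inside the square, whereas the paper rewrites the pair sum via $S^2$ before combining.
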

\begin{proof}
Each \(B_i^2\) occurs in exactly \(n-1\) of the pairwise squares.
Their mixed terms contain each ordered product \(B_iB_j\), \(i\ne j\),
once with a minus sign. Hence
\[
 \sum_{i<j}(B_i-B_j)^2
   =(n-1)\sum_i B_i^2-\Qop(B)
   =n\sum_i B_i^2-S^2,
\]
where the last equality uses \eqref{eq:expand}.
Each \(B_i\) likewise occurs in \(n-1\) of the sums \(B_i+B_j\).
Thus
\[
 \sum_{i<j}D_{ij}=(n-1)S-n\sum_i B_i^2+S^2.
\]
Substituting this into the right-hand side of \eqref{eq:slack} gives
\begin{align*}
 &\frac{n-1}{n}\left(S^2-S+\frac14I\right)
 +\frac1n\left((n-1)S-n\sum_i B_i^2+S^2\right)\\
 &\hspace{2em}=S^2-\sum_i B_i^2+\frac{n-1}{4n}I
 =\Qop(B)+\cn I.
\end{align*}
This proves the identity.
\end{proof}

\begin{proof}[Proof of the bounds in Theorem~\ref{thm:main}]
Every term on the right-hand side of \eqref{eq:slack} is positive
semidefinite. This proves the lower bound.

For the upper bound, the pairwise-square expansion also gives
\[
 n\sum_i B_i^2-S^2=\sum_{i<j}(B_i-B_j)^2\succeq0.
\]
Therefore
\[
 \Qop(B)=S^2-\sum_i B_i^2
 \preceq\frac{n-1}{n}S^2
 \preceq\frac{n-1}{n}I.
\]
The last step follows from the eigenvalues of \(S\) lying in \([0,1]\).
The upper constant is attained, for example, by \(B_i=I/n\) for all \(i\).
\end{proof}

The decomposition also records exactly what happens on an extremal vector.

\begin{corollary}[Vector equality conditions]
\label{cor:vector}
For a feasible family and a unit vector \(x\), the following are equivalent:
\begin{enumerate}[(i)]
 \item \(\Qop(B)x=-\cn x\);
 \item \(Sx=x/2\) and \(D_{ij}x=0\) for every \(i<j\).
\end{enumerate}
\end{corollary}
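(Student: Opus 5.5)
The plan is to test the identity of Lemma~\ref{lem:slack} against the vector \(x\). Every summand on its right-hand side is positive semidefinite, so a quadratic form vanishes exactly when each summand annihilates \(x\).

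For (ii)\(\Rightarrow\)(i), assume \(Sx=x/2\) and \(D_{ij}x=0\) for all \(i<j\). Then \((S-\tfrac12I)x=0\), so \((S-\tfrac12I)^2x=0\). Applying \eqref{eq:slack} to \(x\), the whole right-hand side annihilates \(x\). Hence \((\Qop(B)+\cn I)x=0\), which is (i). This direction is immediate.

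For (i)\(\Rightarrow\)(ii), assume \(\Qop(B)x=-\cn x\) and take the inner product of \eqref{eq:slack} with \(x\). This gives
\[
 0=\frac{n-1}{n}\norm{(S-\tfrac12I)x}^2+\frac1n\sum_{i<j}x^TD_{ij}x .
\]
Each \(D_{ij}\succeq0\) by Lemma~\ref{lem:diff} together with \(B_i+B_j\preceq S\preceq I\), as noted after \eqref{eq:Dij}. Also \(n\ge2\) gives \(\frac{n-1}{n}>0\). So every term is nonnegative, and all of them must vanish.

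Two facts then finish the argument. First, \(\norm{(S-\tfrac12I)x}=0\) gives \(Sx=x/2\). Second, \(x^TD_{ij}x=\norm{D_{ij}^{1/2}x}^2=0\) gives \(D_{ij}x=0\); this is the same kernel argument used in the proof of Lemma~\ref{lem:diff}.

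The only point needing care is to pass from vanishing quadratic forms to vanishing vectors. For each PSD summand this holds because a PSD matrix \(M\) satisfies \(x^TMx=\norm{M^{1/2}x}^2\). The argument uses that \(x^T(S-\tfrac12I)^2x=\norm{(S-\tfrac12I)x}^2\) since \(S\) is symmetric, and that \(\frac{n-1}{n}\) is strictly positive. No real obstacle is expected beyond recording these facts.
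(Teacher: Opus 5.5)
Your proposal is correct and matches the paper's proof: you take the quadratic form of the slack identity \eqref{eq:slack} at \(x\), use that every summand is nonnegative to force \(Sx=x/2\) and \(D_{ij}x=0\) via the square-root kernel argument, and get the converse because each operator on the right of \eqref{eq:slack} annihilates \(x\). You also state explicitly that \(\frac{n-1}{n}>0\), which the paper uses without comment.
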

\begin{proof}
If (i) holds, taking the quadratic form of \eqref{eq:slack} at \(x\) gives
\[
 0=\frac{n-1}{n}\norm{(S-I/2)x}^2
      +\frac1n\sum_{i<j}\ip{x}{D_{ij}x}.
\]
All summands are nonnegative. Thus the norm is zero, and every quadratic
form in the sum is zero.
For a positive semidefinite \(D_{ij}\), this implies
\(D_{ij}x=0\), by the square-root argument used in
Lemma~\ref{lem:diff}.
Conversely, (ii) makes every operator on the right of \eqref{eq:slack}
annihilate \(x\), proving (i).
\end{proof}

\section{An equality construction for every number of matrices}
\label{sec:construction}

For the remainder of this section the dimension is \(d=n\).
Let
\begin{equation}
 e=\frac1{\sqrt n}\one,\quad P=ee^T,\quad
 a=1-\frac1{\sqrt2},\quad C=I-aP,\quad
 b_i=Ce_i,\quad B_i^\star=b_i b_i^T.
 \label{eq:canonical}
\end{equation}
The symbol \(e_i\) denotes the \(i\)th standard basis vector, while \(e\)
is the distinguished unit vector.
The projection relations \(P^2=P=P^T\) will make all computations explicit.

\begin{proposition}[Canonical extremizers]
\label{prop:canonical}
The matrices in \eqref{eq:canonical} are rank-one positive semidefinite,
and satisfy
\begin{align}
 S^\star:=\sum_i B_i^\star&=I-\frac12P,\label{eq:canonicalS}\\
 \ip{b_i}{b_j}&=\delta_{ij}-\frac1{2n},\label{eq:bgram}\\
 \Qop(B^\star)&=\frac1{2n}(I-P)-\cn P.\label{eq:canonicalQ}
\end{align}
In particular, they are feasible and attain the lower bound on \(e\).
\end{proposition}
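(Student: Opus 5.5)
The plan is to compute everything in terms of the single symmetric operator \(C\), using only \(P^2=P=P^T\), \(Pe_i=\frac1{\sqrt n}e\) and \(\sum_i e_ie_i^T=I\). Rank one and positive semidefiniteness are immediate, since \(B_i^\star=b_ib_i^T\) with \(b_i\ne0\). Indeed \(C\) is invertible, with eigenvalue \(1/\sqrt2\) on \(e\) and \(1\) on \(e^\perp\).

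For \eqref{eq:canonicalS}, I will write
\[
S^\star=\sum_i Ce_ie_i^TC^T=C\Bigl(\sum_i e_ie_i^T\Bigr)C=C^2 .
\]
Since \(C=(I-P)+\frac1{\sqrt2}P\) with orthogonal projections, \(C^2=(I-P)+\frac12P=I-\frac12P\). This gives \(S^\star\preceq I\), so the family is feasible.

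For \eqref{eq:bgram}, \(\ip{b_i}{b_j}=e_i^TC^2e_j=\delta_{ij}-\frac12 e_i^TPe_j=\delta_{ij}-\frac1{2n}\), because \(e_i^Te=1/\sqrt n\).

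For \eqref{eq:canonicalQ}, I will use \eqref{eq:expand}, \(\Qop=S^2-\sum_iB_i^2\). The key observation is that \(B_i^{\star2}=\norm{b_i}^2B_i^\star=(1-\frac1{2n})B_i^\star\) by \eqref{eq:bgram}. Hence \(\sum_iB_i^{\star2}=(1-\frac1{2n})S^\star\), and \(\Qop\) becomes a polynomial in \(S^\star\):
\[
\Qop(B^\star)=S^{\star2}-\Bigl(1-\tfrac1{2n}\Bigr)S^\star .
\]
I then evaluate this on the two spectral pieces. On \(I-P\), where \(S^\star\) acts as \(1\), the value is \(1-1+\frac1{2n}=\frac1{2n}\). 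On \(P\), where \(S^\star\) acts as \(\frac12\), the value is \(\frac14-\frac12+\frac1{4n}=-\frac{n-1}{4n}=-\cn\). This is exactly \eqref{eq:canonicalQ}, so \(\Qop(B^\star)e=-\cn e\). Since \(\frac1{2n}>-\cn\), the bound is attained with \(\lambda_{\min}=-\cn\) on \(e\).

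There is no real obstacle here. The only step needing care is the reduction \(\sum_iB_i^{\star2}=(1-\frac1{2n})S^\star\), which relies on all \(b_i\) having equal length. The rest is spectral arithmetic for commuting projections.
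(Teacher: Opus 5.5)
Your proposal is correct and follows the paper's proof essentially step for step. You get \(S^\star=C^2=I-\frac12P\), the Gram identity via \(e_i^TC^2e_j\), and the reduction \((B_i^\star)^2=(1-\frac1{2n})B_i^\star\) that makes \(\Qop(B^\star)\) a polynomial in \(S^\star\), which you then evaluate on \(e^\perp\) and \(e\); the only cosmetic difference is that you square \(C=(I-P)+\frac1{\sqrt2}P\) directly, whereas the paper uses \(2a-a^2=\frac12\).
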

\begin{proof}
For any vector \(z\),
\(z^TB_i^\star z=(b_i^Tz)^2\ge0\).
The matrix \(C\) has eigenvalue \(1/\sqrt2\) on \(\spanop\{e\}\)
and eigenvalue \(1\) on \(e^\perp\), so it is invertible.
Consequently every \(b_i\) is nonzero and \(B_i^\star\) has rank one.

By \(P^2=P\),
\[
 C^2=I-(2a-a^2)P=I-\frac12P,
\]
because \(2a-a^2=1-(1-a)^2=1-1/2=1/2\).
Using \(\sum_i e_i e_i^T=I\), we obtain
\[
 S^\star=\sum_i Ce_i e_i^TC=C^2=I-\frac12P.
\]
Its eigenvalues are \(1/2\) on \(e\) and \(1\) on \(e^\perp\);
hence \(0\preceq S^\star\preceq I\).

Every entry of \(P\) is \(1/n\). It follows that
\[
 \ip{b_i}{b_j}=e_i^TC^2e_j
 =\delta_{ij}-\frac1{2n}.
\]
In particular, \(\norm{b_i}^2=1-1/(2n)\) for every \(i\).
The rank-one multiplication rule now gives
\[
 (B_i^\star)^2
 =(b_i b_i^T)(b_i b_i^T)
 =\norm{b_i}^2\,b_i b_i^T
 =\left(1-\frac1{2n}\right)B_i^\star.
\]
Sum this equality and use \eqref{eq:expand}:
\[
 \Qop(B^\star)=(S^\star)^2
       -\left(1-\frac1{2n}\right)S^\star.
\]
On \(e^\perp\), this polynomial has value \(1/(2n)\).
On \(e\), it has value
\[
 \frac14-\frac12\left(1-\frac1{2n}\right)
 =-\frac14+\frac1{4n}
 =-\frac{n-1}{4n}.
\]
These two invariant spaces give exactly \eqref{eq:canonicalQ}.
\end{proof}

\begin{proof}[Completion of Theorem~\ref{thm:main}]
Proposition~\ref{prop:canonical} proves attainment for every \(n\).
Combined with the universal lower bound, it proves \eqref{eq:optimum}
and Conjecture~\ref{conj:sharp}.
Rescaling by \(A_i=nB_i^\star\) gives
\[
 \sum_i A_i=nI-\frac n2P,\qquad
 \sum_{i\ne j}A_iA_j=\frac n2(I-P)-\frac{n(n-1)}4P.
\]
Thus the lower constant in the original normalization is attained exactly.
\end{proof}

\begin{example}[Two matrices]
\label{ex:two}
In a basis whose first vector is the distinguished extremal direction,
the original normalization admits the particularly simple example
\[
 A_1=
 \begin{pmatrix}
  1/2&1/\sqrt2\\1/\sqrt2&1
 \end{pmatrix},
 \qquad
 A_2=
 \begin{pmatrix}
  1/2&-1/\sqrt2\\-1/\sqrt2&1
 \end{pmatrix}.
\]
These are outer products of \((1/\sqrt2,1)^T\) and
\((1/\sqrt2,-1)^T\), respectively. Consequently they are positive
semidefinite. Their sum is \(\diag(1,2)\preceq2I\).
Their squares are \(3A_1/2\) and \(3A_2/2\), respectively, so
\[
 A_1A_2+A_2A_1
 =\diag(1,4)-\frac32\diag(1,2)
 =\diag(-1/2,1).
\]
This illustrates the negative endpoint \(-2(2-1)/4=-1/2\).
\end{example}

\section{All equality cases and the minimum dimension}
\label{sec:rigidity}

A subspace \(K\subseteq\R^d\) is \emph{common reducing} for the family
\(B_1,\ldots,B_n\) if both \(K\) and \(K^\perp\) are invariant under every
\(B_i\). Since the matrices are symmetric, invariance of \(K\) alone
implies invariance of \(K^\perp\):
for \(z\in K^\perp\) and \(y\in K\),
\(\ip{B_i z}{y}=\ip{z}{B_i y}=0\).

We will prove a complete equality characterization.
Here \(B_i^\star\) always denotes the fixed canonical tuple
\eqref{eq:canonical} in \(\R^n\).

\begin{theorem}[Rigidity of the lower endpoint]
\label{thm:rigidity}
Let \(B_1,\ldots,B_n\) satisfy \eqref{eq:feasible}.
Then \(\lambda_{\min}(\Qop(B))=-\cn\) if and only if there exist
a common reducing subspace \(K\) of dimension \(n\) and an isometry
\(U:\R^n\to K\) such that
\begin{equation}
 B_i|_K=U B_i^\star U^T,\qquad 1\le i\le n.
 \label{eq:block}
\end{equation}
Equivalently, after an orthogonal change of coordinates,
\begin{equation}
 B_i=B_i^\star\oplus R_i,\qquad
 R_i\succeq0,\quad\sum_i R_i\preceq I_{d-n}.
 \label{eq:directsum}
\end{equation}
When \(d=n\), the residual block is absent.
In particular, equality is possible only if \(d\ge n\).
\end{theorem}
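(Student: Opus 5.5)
The plan is to prove the ``if'' direction by direct computation and to spend most of the effort on the ``only if'' direction, starting from Corollary~\ref{cor:vector}. If \eqref{eq:directsum} holds, then \(\Qop(B)=\Qop(B^\star)\oplus\Qop(R)\). By Proposition~\ref{prop:canonical} the first block has minimum eigenvalue \(-\cn\), and by Theorem~\ref{thm:main} the second block is \(\succeq-\cn I\), so \(\lambda_{\min}(\Qop(B))=-\cn\). The equivalence of \eqref{eq:block} and \eqref{eq:directsum} is immediate from the definition of a common reducing subspace. Since \(\dim K=n\), \(d\ge n\) follows at once. For the converse, fix a unit vector \(x\) with \(\Qop(B)x=-\cn x\). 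Corollary~\ref{cor:vector} then gives \(Sx=x/2\) and \(D_{ij}x=0\) for all \(i<j\). The candidate subspace is
\[
K=\spanop\{B_1x,\ldots,B_nx\},
\]
which mirrors the canonical case: there \(B_i^\star e=(b_i^Te)\,b_i\) with \(b_i^Te=1/\sqrt{2n}\), so these vectors span \(\R^n\). Note that \(x=2\sum_iB_ix\in K\).

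The key step is to extract pointwise identities from the vanishing of \(D_{ij}x\). I will revisit the proof of Lemma~\ref{lem:diff} with \(E=B_i\), \(F=B_j\), \(T=B_i+B_j\), and trace where its two inequalities must be tight on \(y=T^{1/2}x\). Tightness in \(K^2\preceq I\) forces \(y\) to lie in the \(\pm1\) eigenspaces of \(K\). These are exactly the directions where \(E\) or \(F\) vanish relative to \(T\). Tightness in \(KTK\preceq K^2\) forces \(Ky\) into the eigenvalue-\(1\) eigenspace of \(T\). The goal is to convert these into the following statements, for all \(i\ne j\):
\begin{enumerate}[(a)]
\item \(B_i(B_jx)\) is a multiple of \(B_ix\), so that \(B_i\) acts on \(K\) as a rank-one map with range \(\spanop\{B_ix\}\);
\item \(B_ix\) is an eigenvector of \(B_i\) with eigenvalue \(\lambda=1-1/(2n)\).
\end{enumerate}
Summing the relations \((B_i+B_j)x=(B_i-B_j)^2x\) over pairs and using \(Sx=x/2\) yields \(\sum_iB_i^2x=\lambda x/2\). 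This is consistent with (b) and fixes the constant \(\lambda\), so it should be usable to pin the scalars once (a) is known.

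Once (a) and (b) are established, the Gram matrix \(G_{ij}=\ip{B_ix}{B_jx}\) is determined. Indeed, \(\ip{B_ix}{B_jx}=\ip{x}{B_iB_jx}\), and (a) reduces this to multiples of \(\ip{x}{B_ix}\). Those quantities in turn are forced by symmetry of the pair conditions together with \(\sum_i\ip{x}{B_ix}=1/2\): each \(\ip{x}{B_ix}\) equals \(1/(2n)\), and \(G\) becomes \(\frac1{2n}(\delta_{ij}-\frac1{2n})\), matching \eqref{eq:bgram} scaled by \((b_i^Te)^2=1/(2n)\). This Gram matrix is nonsingular, so \(\dim K=n\). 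The map \(U b_i=\sqrt{2n}\,B_ix\) extends to an isometry \(U:\R^n\to K\). Invariance of \(K\) under each \(B_i\) is exactly (a). By (a) and (b), \(B_i|_K\) is the rank-one operator \(\lambda\,(B_ix)(B_ix)^T/\norm{B_ix}^2\), which equals \(UB_i^\star U^T\). Symmetry then makes \(K\) reducing, and the residual blocks \(R_i=B_i|_{K^\perp}\) inherit \(R_i\succeq0\) and \(\sum_iR_i\preceq I\) from \eqref{eq:feasible}.

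I expect step (a)--(b) to be the main obstacle, namely turning the equality conditions \(D_{ij}x=0\) into the rank-one and eigenvector relations. If the equality tracing in Lemma~\ref{lem:diff} proves too messy, I will use the embedding announced in the introduction as a fallback. Write \(B_i=W^T\Pi_iW\) with orthogonal projections \(\Pi_0,\ldots,\Pi_n\) summing to \(I\) and an isometry \(W\). In that model, \(D_{ij}x=0\) should become a statement about \(Wx\) having a norm-preserving compression, which forces the needed rank-one structure directly.
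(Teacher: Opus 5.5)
Your outer structure is the paper's: the same subspace \(K=\spanop\{B_1x,\ldots,B_nx\}\), the same Gram matrix, the same isometry \(Ub_i=\sqrt{2n}\,B_ix\), and the same sufficiency argument. The gap is the step you defer, which is the entire content of necessity, and the mechanisms you sketch for it would not close it. The summed identity \(\sum_iB_i^2x=\lambda x/2\) loses exactly the information you need. Grant \(B_iB_jx=c_{ij}B_ix\) and \(B_i^2x=\mu_iB_ix\) with unknown scalars. The summed identity then gives only \(\sum_i(\mu_i-\lambda)B_ix=0\). That pins the \(\mu_i\) only if the \(B_ix\) are already known to be independent, and in your plan independence is deduced from the Gram matrix, which presupposes the constants, so the argument is circular. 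Likewise, ``symmetry'' by itself gives only \(c_{ij}\ip{x}{B_ix}=c_{ji}\ip{x}{B_jx}\), not equal weights \(\ip{x}{B_ix}\). In the paper that equality is a genuine consequence of \(Sx=x/2\), obtained from \(r\perp z_i-z_j\) with \(r=\frac12(z-z_0)\in W^\perp\). Note also that (a) and (b) cannot come from tightness in a single pair; they use \(Sx=x/2\) essentially.

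The missing idea is the ordered-pair identity
\[
 B_i^2x-B_iB_jx=B_ix\qquad(i\ne j).
\]
Adding it to the \((j,i)\) identity gives exactly \(D_{ij}x=0\); the point is that the equality case of positivity lets you split that sum. Your tracing of Lemma~\ref{lem:diff} produces it if pushed one step further. With \(E=B_i\), \(F=B_j\), \(T=B_i+B_j\), split \(y=T^{1/2}x=y_++y_-\) with \(y_\pm\) in the \(\pm1\) eigenspaces of the matrix \(K\) of that lemma. Put \(u_\pm=T^{-1/2}y_\pm\), so \(B_ju_+=0\), \(B_iu_-=0\), and \(x-u_+-u_-\in\ker T\). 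Tightness of \(KTK\preceq K^2\) then reads \(B_iu_+-B_ju_-=u_+-u_-\), and applying \(B_i\) gives the identity. The paper gets the same relation, as \(B_i(w_j-w_k)=(\delta_{ij}-\delta_{ik})w_i\), by applying \(V^TP_i\) to \((P_i-P_j)Vx=V(B_i-B_j)x\) from Lemma~\ref{lem:embedding}.

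Now use \(Sx=x/2\) for each \(i\) separately rather than summed: \(B_ix=2B_iSx=2nB_i^2x-2(n-1)B_ix\). Hence \(B_i^2x=(1-\frac1{2n})B_ix\) and \(B_iB_jx=-\frac1{2n}B_ix\), which are (a) and (b) with explicit constants. Symmetry of the Gram matrix, \(\ip{x}{B_iB_jx}=\ip{x}{B_jB_ix}\), then forces \(\ip{x}{B_ix}=\ip{x}{B_jx}\), so each equals \(1/(2n)\). The rest of your argument then goes through as written. Repaired this way (action first, Gram matrix read off from it), your proof is a slightly more elementary variant of the paper's that avoids the residual vector \(r\), though the key identity is the same.
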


We begin with an explicit construction of the projection representation
used in the proof.

\begin{lemma}[An isometric projection representation]
\label{lem:embedding}
Given a feasible family, put \(B_0=I-S\) and define
\[
 V:\R^d\longrightarrow \mathcal H:=(\R^d)^{n+1},\qquad
 Vz=(B_0^{1/2}z,B_1^{1/2}z,\ldots,B_n^{1/2}z).
\]
Let \(P_i\) denote the orthogonal projection in \(\mathcal H\) onto its
\(i\)th coordinate block, and put \(W=\ran V\), \(R=VV^T\).
Then \(V^TV=I\), \(R\) is the orthogonal projection onto \(W\), and
\[
 V^TP_iV=B_i\quad(0\le i\le n).
\]
For \(i\ne j\) in \(\{1,\ldots,n\}\), the slack matrix in
\eqref{eq:Dij} satisfies
\begin{equation}
 D_{ij}=V^T(P_i-P_j)(I-R)(P_i-P_j)V.
 \label{eq:projectiongap}
\end{equation}
Consequently, \(D_{ij}x=0\) implies \((P_i-P_j)Vx\in W\).
\end{lemma}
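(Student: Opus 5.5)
We verify the claims of Lemma~\ref{lem:embedding} in order. Each one reduces to the identity \(\sum_{i=0}^n B_i=I\) together with the block structure of \(\mathcal H\).

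\textbf{Step 1: \(V\) is an isometry and \(R\) is a projection.} First, \(B_0=I-S\succeq0\) by \eqref{eq:feasible}, so every square root \(B_i^{1/2}\) exists and is symmetric. The adjoint is
\[
 V^T(y_0,\ldots,y_n)=\sum_i B_i^{1/2}y_i ,
\]
so \(V^TV=\sum_{i=0}^n B_i=I\). Then \(R=VV^T\) satisfies \(R^2=V(V^TV)V^T=R\) and \(R^T=R\). Its range is contained in \(\ran V\), and since \(RV=V\) it equals \(\ran V=W\). Hence \(R\) is the orthogonal projection onto \(W\).

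\textbf{Step 2: compressions of the coordinate projections.} Since \(P_iVz\) keeps only the \(i\)th block \(B_i^{1/2}z\), we get
\[
 V^TP_iVz=B_i^{1/2}B_i^{1/2}z=B_iz .
\]

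\textbf{Step 3: the identity \eqref{eq:projectiongap}.} Put \(G=P_i-P_j\). Expand
\[
 V^TG(I-R)GV=V^TG^2V-(V^TGV)(V^TGV),
\]
using \(R=VV^T\). The projections \(P_i\) and \(P_j\) are orthogonal with \(P_iP_j=0\) for \(i\ne j\), so \(G^2=P_i+P_j\). By Step~2,
\[
 V^TG^2V=B_i+B_j,\qquad V^TGV=B_i-B_j ,
\]
so the right-hand side is \(B_i+B_j-(B_i-B_j)^2=D_{ij}\). This also gives an independent proof that \(D_{ij}\succeq0\), since \(I-R\) is a projection.

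\textbf{Step 4: the consequence.} Since \(I-R\) is an orthogonal projection,
\[
 \ip{x}{D_{ij}x}=\norm{(I-R)GVx}^2 .
\]
If \(D_{ij}x=0\), this vanishes, so \((I-R)(P_i-P_j)Vx=0\), that is, \((P_i-P_j)Vx\in W\).

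No step is a genuine obstacle. The only points needing care are that \(V^T\) is computed blockwise with symmetric square roots, and that \(P_iP_j=0\) for distinct blocks, which is what collapses \(G^2\) to \(P_i+P_j\).
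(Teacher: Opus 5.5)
Your proposal is correct and follows the paper's proof essentially step for step. The shared steps are $V^TV=\sum_{i=0}^n B_i=I$, the compressions $V^TP_iV=B_i$, expanding $V^T(P_i-P_j)(I-R)(P_i-P_j)V$ via $(P_i-P_j)^2=P_i+P_j$ and $R=VV^T$, and the norm identity $\ip{x}{D_{ij}x}=\norm{(I-R)(P_i-P_j)Vx}^2$. Your explicit check that $\ran R=W$ via $RV=V$, and your remark that this gives an independent proof of $D_{ij}\succeq0$, are small, correct additions.
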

\begin{proof}
Every \(B_i\), including \(B_0\), is positive semidefinite, so its
symmetric positive semidefinite square root is defined by the spectral
decomposition. The definition gives
\[
 V^TV=\sum_{i=0}^n B_i=I.
\]
Thus \(V\) preserves inner products and is injective.
Moreover, \(R^T=R\) and
\(R^2=V(V^TV)V^T=VV^T=R\), with range \(W\);
this proves that \(R\) is the orthogonal projection onto \(W\).
The identity \(V^TP_iV=B_i\) follows by selecting the \(i\)th coordinate
in the displayed definition of \(V\).

Put \(T=P_i-P_j\).
Distinct coordinate projections are orthogonal, so
\(T^2=P_i+P_j\).
Therefore
\begin{align*}
 V^TT(I-R)TV
 &=V^TT^2V-V^TT(VV^T)TV\\
 &=B_i+B_j-(V^TTV)^2\\
 &=B_i+B_j-(B_i-B_j)^2
 =D_{ij}.
\end{align*}
For any \(x\),
\[
 \ip{x}{D_{ij}x}=\norm{(I-R)(P_i-P_j)Vx}^2.
\]
If \(D_{ij}x=0\), this norm vanishes.
Hence \((P_i-P_j)Vx=R(P_i-P_j)Vx\in W\).
\end{proof}

\begin{proof}[Proof of Theorem~\ref{thm:rigidity}, necessity]
Suppose that \(\lambda_{\min}(\Qop(B))=-\cn\).
There is a unit eigenvector \(x\) at that eigenvalue because
\(\Qop(B)\) is a real symmetric matrix.
By Corollary~\ref{cor:vector},
\begin{equation}
 Sx=\frac12x,\qquad D_{ij}x=0\quad(i<j).
 \label{eq:xequality}
\end{equation}
Use the notation of Lemma~\ref{lem:embedding}, and set
\[
 v=Vx,\qquad z_i=P_i v\ (0\le i\le n),\qquad
 z=\sum_{i=1}^n z_i.
\]
The vectors \(z_0,\ldots,z_n\) are pairwise orthogonal because they
belong to distinct coordinate blocks.
They satisfy \(v=z_0+z\).
Furthermore,
\[
 \norm{v}^2=1,\qquad
 \norm{z}^2=\ip{x}{Sx}=\frac12,\qquad
 \norm{z_0}^2=\ip{x}{(I-S)x}=\frac12.
\]
The projection of \(z\) onto \(W\) is
\[
 Rz=VV^T\left(\sum_{i=1}^n P_i\right)Vx
    =VSx=\frac12v.
\]
Define
\begin{equation}
 r:=z-\frac12v=\frac12(z-z_0).
 \label{eq:r}
\end{equation}
Then \(r=(I-R)z\in W^\perp\), and orthogonality of \(z\) and \(z_0\)
gives
\begin{equation}
 \norm{r}^2=\frac14(\norm{z}^2+\norm{z_0}^2)=\frac14.
 \label{eq:rnorm}
\end{equation}

For every \(i\ne j\) with \(i,j\ge1\),
Lemma~\ref{lem:embedding} and \eqref{eq:xequality} imply
\begin{equation}
 z_i-z_j\in W.
 \label{eq:differencesW}
\end{equation}
Take its inner product with \(r\in W^\perp\).
Since the \(z_i\)'s are pairwise orthogonal,
\[
 0=\ip{r}{z_i-z_j}
   =\frac12\bigl(\norm{z_i}^2-\norm{z_j}^2\bigr).
\]
Thus all \(\norm{z_i}^2\), for \(1\le i\le n\), are equal.
Their sum is \(\norm{z}^2=1/2\), so
\begin{equation}
 \norm{z_i}^2=\frac1{2n}\qquad(1\le i\le n).
 \label{eq:ziweights}
\end{equation}

The differences in \eqref{eq:differencesW} also imply that
\((I-R)z_i\) is the same vector for every \(i\ge1\).
Their sum is \((I-R)z=r\), and consequently
\[
 (I-R)z_i=\frac rn,\qquad
 Rz_i=z_i-\frac rn.
\]
Let
\begin{equation}
 w_i=B_i x\in\R^d.
 \label{eq:wi}
\end{equation}
Since \(V^Tz_i=B_i x\), we have
\[
 Vw_i=Rz_i=z_i-\frac rn.
\]
We next compute the Gram matrix of the \(w_i\)'s.
Equation~\eqref{eq:r} and orthogonality give
\(\ip{z_i}{r}=\norm{z_i}^2/2=1/(4n)\).
As \(V\) is an isometry, \eqref{eq:rnorm} yields
\begin{align}
 \ip{w_i}{w_j}
 &=\ip{z_i-r/n}{z_j-r/n}\notag\\
 &=\frac{\delta_{ij}}{2n}
       -\frac1n\frac1{4n}
       -\frac1n\frac1{4n}
       +\frac1{n^2}\frac14\notag\\
 &=\frac{\delta_{ij}}{2n}-\frac1{4n^2}.
 \label{eq:wgram}
\end{align}
This \(n\)-by-\(n\) Gram matrix is
\[
 G=\frac1{2n}I_n-\frac1{4n^2}\one\one^T.
\]
On \(\one^\perp\) it has eigenvalue \(1/(2n)\), and on
\(\spanop\{\one\}\) it has eigenvalue
\(1/(2n)-n/(4n^2)=1/(4n)\).
Both are positive.
The vectors \(w_1,\ldots,w_n\) are therefore linearly independent.
In particular,
\begin{equation}
 K:=\spanop\{w_1,\ldots,w_n\}\quad\hbox{has dimension }n,
 \qquad d\ge n.
 \label{eq:K}
\end{equation}
Also \(\sum_i w_i=Sx=x/2\), so \(x\in K\).

It remains to determine how each \(B_i\) acts on this subspace.
For distinct \(j,k\), equations \eqref{eq:differencesW} and
\eqref{eq:wi} imply
\[
 V(w_j-w_k)=R(z_j-z_k)=z_j-z_k.
\]
Using \(B_i=V^TP_iV\), we obtain, for every \(i\),
\begin{align}
 B_i(w_j-w_k)
 &=V^TP_i(z_j-z_k)\notag\\
 &=\delta_{ij}V^Tz_i-\delta_{ik}V^Tz_i
 =(\delta_{ij}-\delta_{ik})w_i.
 \label{eq:actiondifference}
\end{align}
Thus for fixed \(i\), the vector \(B_iw_j-\delta_{ij}w_i\)
does not depend on \(j\). Denote it by \(t_i\).
Summing over \(j\), and using \(\sum_jw_j=x/2\), gives
\[
 \frac12w_i
 =B_i\left(\sum_j w_j\right)
 =\sum_j(\delta_{ij}w_i+t_i)
 =w_i+nt_i.
\]
It follows that \(t_i=-w_i/(2n)\), or
\begin{equation}
 B_iw_j=\left(\delta_{ij}-\frac1{2n}\right)w_i
 \quad\hbox{for every }i,j.
 \label{eq:action}
\end{equation}
In particular, \(K\) is invariant under every \(B_i\).
Symmetry then makes it a common reducing subspace.

Equation~\eqref{eq:wgram} shows that the rank-one operator
\(2n w_i w_i^T\) has exactly the same action on each basis vector \(w_j\):
\[
 2n w_iw_i^Tw_j
 =2n\left(\frac{\delta_{ij}}{2n}-\frac1{4n^2}\right)w_i
 =\left(\delta_{ij}-\frac1{2n}\right)w_i.
\]
Consequently,
\begin{equation}
 B_i|_K=2n w_iw_i^T|_K.
 \label{eq:rankoneblock}
\end{equation}

To identify this block with the canonical construction, put
\(w_i^\star=B_i^\star e\).
Since \(Ce=e/\sqrt2\),
\[
 \ip{b_i}{e}=e_i^TCe=\frac1{\sqrt{2n}},
 \qquad w_i^\star=\frac{b_i}{\sqrt{2n}}.
\]
Equation~\eqref{eq:bgram} gives
\[
 \ip{w_i^\star}{w_j^\star}
   =\frac{\delta_{ij}}{2n}-\frac1{4n^2}
   =\ip{w_i}{w_j}.
\]
Both sets are linearly independent.
Define \(U\) by \(Uw_i^\star=w_i\), extending linearly.
Equality of the two Gram matrices shows explicitly that for any real
coefficients \(\alpha_i,\beta_j\),
\[
 \ip{U\sum_i\alpha_iw_i^\star}{U\sum_j\beta_jw_j^\star}
 =\sum_{i,j}\alpha_i\beta_j\ip{w_i}{w_j}
 =\ip{\sum_i\alpha_iw_i^\star}{\sum_j\beta_jw_j^\star}.
\]
Thus \(U:\R^n\to K\) is an isometry.
Since \(B_i^\star=2n w_i^\star(w_i^\star)^T\),
\eqref{eq:rankoneblock} becomes
\(B_i|_K=UB_i^\star U^T\).
This proves \eqref{eq:block}.

On \(K^\perp\), let \(R_i=B_i|_{K^\perp}\).
Restriction of the feasibility inequalities gives
\(R_i\succeq0\) and \(\sum_iR_i\preceq I_{K^\perp}\).
This yields \eqref{eq:directsum}.
\end{proof}

\begin{proof}[Proof of Theorem~\ref{thm:rigidity}, sufficiency]
Suppose \eqref{eq:directsum} holds after an orthogonal change of coordinates.
Multiplication and addition preserve the block decomposition, so
\[
 \Qop(B)=\Qop(B^\star)\oplus\Qop(R).
\]
By Proposition~\ref{prop:canonical}, the first block has eigenvalue
\(-\cn\).
The residual family is feasible, so Theorem~\ref{thm:main} bounds every
eigenvalue of its product sum below by \(-\cn\).
Therefore the smallest eigenvalue of the direct sum is exactly \(-\cn\).
Orthogonal changes of coordinates preserve eigenvalues, completing the
proof.
\end{proof}

\section{Consequences of rigidity}
\label{sec:consequences}

\begin{corollary}[Strict improvement below dimension \(n\)]
\label{cor:dimension}
For fixed \(n\ge2\) and \(1\le d<n\), there exists
\(\varepsilon_{n,d}>0\) such that every feasible \(d\)-dimensional family
satisfies
\[
 \Qop(B)\succeq(-\cn+\varepsilon_{n,d})I.
\]
\end{corollary}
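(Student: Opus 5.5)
The proof will be a compactness argument combined with Theorem~\ref{thm:rigidity}. Fix \(n\ge2\) and \(1\le d<n\), and set
\[
 m_{n,d}:=\inf\bigl\{\lambda_{\min}(\Qop(B)) : B_1,\ldots,B_n\in\R^{d\times d}\ \text{feasible}\bigr\}.
\]
By the lower bound in Theorem~\ref{thm:main}, \(m_{n,d}\ge-\cn\). The claim is exactly that \(m_{n,d}>-\cn\). We can then take \(\varepsilon_{n,d}:=m_{n,d}+\cn\), since \(\Qop(B)\succeq\lambda_{\min}(\Qop(B))I\succeq m_{n,d}I\) for every feasible \(d\)-dimensional family.

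First, the feasible set
\[
 \mathcal F_d=\{(B_1,\ldots,B_n): B_i\succeq0,\ \textstyle\sum_iB_i\preceq I\}\subseteq(\R^{d\times d}_{\rm sym})^n
\]
is compact. It is closed, because it is an intersection of the closed conditions \(z^TB_iz\ge0\) and \(z^T(I-\sum_iB_i)z\ge0\) over all \(z\). It is bounded, because \(0\preceq B_i\preceq S\preceq I\) gives \(\norm{B_i}\le1\) for each \(i\).

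Second, the map \(B\mapsto\lambda_{\min}(\Qop(B))\) is continuous. Indeed, \(\Qop\) is a polynomial in the entries, and \(\lambda_{\min}(H)=\min_{\norm{x}=1}x^THx\) is \(1\)-Lipschitz in the operator norm (Weyl's inequality). Hence the infimum \(m_{n,d}\) is attained at some feasible \(d\)-dimensional family \(B\).

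Third, suppose for contradiction that \(m_{n,d}=-\cn\). Then the minimizing family satisfies \(\lambda_{\min}(\Qop(B))=-\cn\). The necessity direction of Theorem~\ref{thm:rigidity} then provides an \(n\)-dimensional common reducing subspace \(K\subseteq\R^d\), which forces \(d\ge n\) and contradicts \(d<n\). Therefore \(m_{n,d}>-\cn\), and \(\varepsilon_{n,d}>0\) as required.

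There is no real obstacle here, because all the substance is in Theorem~\ref{thm:rigidity}. The one point to check is that the minimum is attained rather than only approached, which is why the compactness of \(\mathcal F_d\) and the continuity of \(\lambda_{\min}\) are needed. This argument does not give an explicit value of \(\varepsilon_{n,d}\). Obtaining one would require a quantitative stability version of Corollary~\ref{cor:vector} and Lemma~\ref{lem:embedding}, which we do not attempt.
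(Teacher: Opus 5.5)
Your proposal is correct and matches the paper's proof step by step: compactness of the feasible set, continuity of \(\lambda_{\min}\circ\Qop\) via the variational formula, attainment of the minimum, and exclusion of the value \(-\cn\) by Theorem~\ref{thm:rigidity} because \(d<n\). You also note explicitly that \(\Qop(B)\succeq m_{n,d}I\), which turns the eigenvalue gap into the stated operator inequality; this is a small clarification within the same argument.
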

\begin{proof}
The feasible set is closed: positive semidefiniteness and the inequality
\(\sum_iB_i\preceq I\) are preserved under entrywise limits.
It is bounded because \(0\preceq B_i\preceq I\) for every \(i\), which
bounds all matrix entries.
In finite dimension the feasible set is therefore compact.
The function \(B\mapsto\lambda_{\min}(\Qop(B))\) is continuous.
Indeed, \(B\mapsto\Qop(B)\) is polynomial, and for symmetric \(H,K\)
the variational formula
\(\lambda_{\min}(H)=\min_{\norm{x}=1}x^THx\) gives
\(\lvert\lambda_{\min}(H)-\lambda_{\min}(K)\rvert\le\norm{H-K}\).
Thus the minimum over feasible families is attained.
It is at least \(-\cn\), and cannot equal \(-\cn\) by
Theorem~\ref{thm:rigidity}, since \(d<n\).
The difference between the attained minimum and \(-\cn\) is the required
positive \(\varepsilon_{n,d}\).
\end{proof}

This corollary asserts existence of a gap for each fixed \(n,d\);
it does not supply a formula for that gap.

\begin{corollary}[Positive definite matrices]
\label{cor:pd}
If all \(B_i\) are positive definite, then
\(\lambda_{\min}(\Qop(B))>-\cn\).
Nevertheless, the infimum over positive definite feasible families,
allowing dimension \(n\), is still \(-\cn\).
\end{corollary}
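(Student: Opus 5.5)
The first assertion will follow from Theorem~\ref{thm:rigidity}. Suppose every \(B_i\) is positive definite and \(\lambda_{\min}(\Qop(B))=-\cn\). Then there is a common reducing subspace \(K\) of dimension \(n\) on which \(B_i|_K=UB_i^\star U^T\). Each \(B_i^\star\) has rank one, so for \(n\ge2\) the restriction \(B_i|_K\) has a nontrivial kernel in \(K\). Since \(K\) reduces \(B_i\), that kernel lies in \(\ker B_i\), contradicting positive definiteness. By the universal lower bound of Theorem~\ref{thm:main}, the eigenvalue is then strictly larger than \(-\cn\). A more direct alternative avoids the full rigidity theorem. By Corollary~\ref{cor:vector}, any extremal unit vector \(x\) satisfies \(D_{ij}x=0\), and in the rigidity proof this forced \(B_iw_j=(\delta_{ij}-\frac1{2n})w_i\) with the \(w_j\) independent. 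In particular \(B_i\) kills every vector of the form \(w_j-w_k\) with \(j,k\ne i\), and such vectors are nonzero when \(n\ge3\). Quoting the rigidity theorem is cleaner and covers \(n=2\) as well, so I will use it.

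For the infimum, I will perturb the canonical tuple in dimension \(n\). For \(0<t<1\), set
\[
 B_i^{(t)}=(1-t)B_i^\star+\frac tn I .
\]
Each \(B_i^{(t)}\) is positive definite, because \(B_i^\star\succeq0\) and \(t/n>0\). The sum is \((1-t)S^\star+tI\preceq I\), since \(S^\star\preceq I\) by Proposition~\ref{prop:canonical}. So the family is feasible and positive definite. The map \(B\mapsto\lambda_{\min}(\Qop(B))\) is continuous, by the bound \(\lvert\lambda_{\min}(H)-\lambda_{\min}(K)\rvert\le\norm{H-K}\) used in Corollary~\ref{cor:dimension}. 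Hence \(\lambda_{\min}(\Qop(B^{(t)}))\to\lambda_{\min}(\Qop(B^\star))=-\cn\) as \(t\to0\). Together with the lower bound, this shows the infimum over positive definite families is exactly \(-\cn\).

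If an explicit rate is wanted, \(\Qop(B^{(t)})\) is easy to compute. Expanding gives
\[
 \Qop(B^{(t)})=(1-t)^2\Qop(B^\star)+\frac{2t(1-t)(n-1)}{n}S^\star+\frac{t^2(n-1)}{n}I .
\]
All three terms commute with \(P\), so the value on \(e\) is
\[
 -(1-t)^2\cn+t(1-t)\tfrac{n-1}{n}+t^2\tfrac{n-1}{n},
\]
which tends to \(-\cn\). The only delicate point is the rank argument in the first part, namely that a rank-one block on an \(n\)-dimensional reducing subspace yields a genuine kernel vector of \(B_i\). This is immediate because \(K\) is invariant and \(n\ge2\).
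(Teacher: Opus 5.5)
Your proposal is correct and matches the paper's proof. The strict inequality comes from the rigidity theorem: it forces a rank-one block on an $n$-dimensional reducing subspace, which gives a kernel vector of each $B_i$. The infimum is approached by the same perturbation $(1-t)B_i^\star+\frac tn I$, with the same expansion evaluated on $e$; your extra continuity argument is a harmless alternative to that explicit computation.
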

\begin{proof}
If equality held, Theorem~\ref{thm:rigidity} would give an
\(n\)-dimensional reducing subspace on which every \(B_i\) has rank one.
As \(n\ge2\), each such restriction has a nonzero kernel vector.
That vector would also lie in the kernel of \(B_i\) on the full space,
contradicting positive definiteness.

For approximation, take the canonical family and, for \(0<t<1\), define
\[
 B_i(t)=(1-t)B_i^\star+\frac tn I_n.
\]
For any nonzero \(z\),
\(z^TB_i(t)z\ge(t/n)\norm{z}^2>0\), so the matrices are positive definite.
Their sum is
\[
 \sum_i B_i(t)=(1-t)S^\star+tI_n\preceq I_n,
\]
so they are feasible.
The product sum has the exact expansion
\begin{equation}
 \Qop(B(t))
  =(1-t)^2\Qop(B^\star)
   +\frac{2t(1-t)(n-1)}n S^\star
   +\frac{t^2(n-1)}n I_n.
 \label{eq:pdexpansion}
\end{equation}
To verify it, expand each ordered product:
the first terms sum to \((1-t)^2\Qop(B^\star)\);
the mixed terms contain each \(B_i^\star\) exactly \(2(n-1)\) times
with coefficient \(t(1-t)/n\);
and the \(n(n-1)\) constant terms have coefficient \(t^2/n^2\).

In particular, \(e\) is an eigenvector in \eqref{eq:pdexpansion},
with eigenvalue
\[
 -(1-t)^2\cn+\frac{t(1-t)(n-1)}n+\frac{t^2(n-1)}n
 =-(1-t)^2\cn+\frac{t(n-1)}n,
\]
which tends to \(-\cn\) as \(t\downarrow0\).
The universal lower bound gives the reverse bound on the infimum,
proving the claim.
\end{proof}

\begin{proposition}[Commuting families have no negative endpoint]
\label{prop:commuting}
If a feasible family commutes pairwise, then \(\Qop(B)\succeq0\).
The canonical family is pairwise noncommuting when \(n\ge2\).
\end{proposition}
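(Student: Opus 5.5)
For the first claim, I would diagonalize the family simultaneously. Pairwise commuting symmetric matrices are orthogonally diagonalizable in a common orthonormal basis. In that basis each \(B_i=\diag(\beta_{i,1},\ldots,\beta_{i,d})\) with \(\beta_{i,k}\ge0\), and \(\Qop(B)\) is diagonal with entries
\[
 \sum_{i\ne j}\beta_{i,k}\beta_{j,k}\ge0 .
\]
Each entry is a sum of products of nonnegative reals, so \(\Qop(B)\succeq0\). Feasibility is not even needed here; only \(B_i\succeq0\) and commutativity are used. An alternative route avoids the spectral theorem. For commuting positive semidefinite \(B_i,B_j\), the product \(B_iB_j=B_i^{1/2}B_jB_i^{1/2}\) is positive semidefinite, because \(B_i^{1/2}\) is a polynomial in \(B_i\) and therefore commutes with \(B_j\). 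Summing these terms gives the claim. I would present this second version, since it is shorter.

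For the second claim, I would use the rank-one form \(B_i^\star=b_ib_i^T\) from Proposition~\ref{prop:canonical}. For \(i\ne j\),
\[
 B_i^\star B_j^\star=\ip{b_i}{b_j}\,b_ib_j^T=-\frac1{2n}\,b_ib_j^T,
\]
using \eqref{eq:bgram}. Similarly \(B_j^\star B_i^\star=-\frac1{2n}b_jb_i^T\). Commutation would therefore force \(b_ib_j^T=b_jb_i^T\). Since \(C\) is invertible, \(b_i\) and \(b_j\) are linearly independent, because \(e_i\) and \(e_j\) are. The two rank-one matrices then have different ranges, \(\spanop\{b_i\}\) and \(\spanop\{b_j\}\), so they cannot be equal. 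Concretely, applying both sides to a vector \(y\) with \(b_j^Ty\ne0\) gives a multiple of \(b_i\) on the left, whereas the right side lies in \(\spanop\{b_j\}\). The only case that needs care is when the right side vanishes, \(b_i^Ty=0\). This is handled by choosing \(y=b_j\) itself: then \(b_j^Tb_j>0\) and \(b_i^Tb_j=-1/(2n)\ne0\), so both sides are nonzero and lie in independent directions.

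I expect no real obstacle. The one point to state carefully is the nonvanishing of \(\ip{b_i}{b_j}=-1/(2n)\), which is exactly what makes the products nonzero and hence distinguishable.
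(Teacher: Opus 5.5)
Your proposal is correct. The noncommutation part is essentially the paper's argument. Both use the nonzero factor \(\ip{b_i}{b_j}=-1/(2n)\) to reduce commutation to \(b_ib_j^T=b_jb_i^T\), then refute that using the independence of \(b_i\) and \(b_j\). The paper applies both sides to \(b_i\); your range comparison is equivalent. Your extra care about the case \(b_i^Ty=0\) is unnecessary, because a nonzero multiple of \(b_i\) never lies in \(\spanop\{b_j\}\), including when the right side is \(0\).

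For positivity, the paper uses your first route. It proves simultaneous orthogonal diagonalization by restricting successively to eigenspaces, then reads off the diagonal entries \(\sum_{i\ne j}\beta_{i,k}\beta_{j,k}\ge0\). The route you chose to present is instead termwise: each ordered product \(B_iB_j=B_i^{1/2}B_jB_i^{1/2}\) is already symmetric and positive semidefinite. This is shorter and needs no common eigenbasis for the whole family, only functional calculus for one matrix at a time. It does not, however, avoid the spectral theorem, as you claim. That \(B_i^{1/2}\) exists and is a polynomial in \(B_i\) comes from diagonalizing \(B_i\); what you avoid is only the simultaneous-diagonalization step. The paper's route costs that extra lemma but gives more: it exhibits the spectrum of \(\Qop(B)\) explicitly in terms of joint eigenvalues, reducing the commuting case entirely to scalar inequalities. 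As you note, both arguments use only \(B_i\succeq0\) and commutativity, never \(S\preceq I\).
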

\begin{proof}
Commuting real symmetric matrices admit a common orthonormal eigenbasis.
For completeness, decompose into eigenspaces of \(B_1\).
Every \(B_j\) preserves those spaces because it commutes with \(B_1\).
Diagonalize \(B_2\) within them, and continue over the finite family.
The resulting common basis diagonalizes every \(B_i\).
Every diagonal entry is nonnegative by positive semidefiniteness, so
every diagonal entry of \(\Qop(B)\) is a sum of nonnegative scalar
products. This proves the first assertion.

For the canonical family and \(i\ne j\),
\[
 B_i^\star B_j^\star
 =-\frac1{2n}b_i b_j^T,\qquad
 B_j^\star B_i^\star
 =-\frac1{2n}b_j b_i^T.
\]
The vectors \(b_i,b_j\) are linearly independent since \(C\) is invertible.
Hence \(b_i b_j^T\ne b_j b_i^T\):
otherwise applying the equality to \(b_i\) would make \(b_j\)
a multiple of \(b_i\).
The two products are therefore different.
\end{proof}

\section{A two-step quadratic optimization realization}
\label{sec:optimization}

This section connects the endpoint to an explicit finite-sum optimization
problem. It also fixes the quantity being compared: the expected iterate
after two steps.

\begin{proposition}[Strongly convex quadratics realizing the endpoint]
\label{prop:optimization}
Let \(B_i^\star\) be the canonical family in dimension \(n\), and define
\[
 H_i=I-B_i^\star,\qquad
 f_i(x)=\frac12x^TH_i x,\qquad
 f(x)=\frac1n\sum_{i=1}^n f_i(x).
\]
Every \(f_i\) has gradient Lipschitz constant \(1\), is strongly convex
with parameter \(1/(2n)\), and has unique minimizer \(0\).
Start at \(x_0=e\), and take two component-gradient steps with stepsize \(1\):
\[
 x_1=x_0-\nabla f_i(x_0),\qquad
 x_2=x_1-\nabla f_j(x_1).
\]
If \((i,j)\) is uniform over ordered distinct pairs, then
\begin{equation}
 \E_{\rm distinct}x_2=-\frac1{4n^2}e.
 \label{eq:distinctiterate}
\end{equation}
If \(i,j\) are independent and uniform, then
\begin{equation}
 \E_{\rm independent}x_2=\frac1{4n^2}e.
 \label{eq:independentiterate}
\end{equation}
The distinct-pair mean update matrix has the smallest possible
eigenvalue among all positive semidefinite update families with
\(\sum_iB_i\preceq I\).
\end{proposition}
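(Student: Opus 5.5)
The plan is to reduce every assertion to facts already established about the canonical family: the spectrum of \(B_i^\star\), the identity \(S^\star=I-\tfrac12P\) from Proposition~\ref{prop:canonical}, the formula \eqref{eq:canonicalQ} for \(\Qop(B^\star)\), and the universal bound of Theorem~\ref{thm:main}. No new inequality is needed; the work is in computing the expectations carefully.

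\textbf{Regularity of the \(f_i\).} Since \(B_i^\star=b_ib_i^T\) has rank one, its eigenvalues are \(\norm{b_i}^2=1-1/(2n)\) on \(\spanop\{b_i\}\) and \(0\) on \(b_i^\perp\). This uses \eqref{eq:bgram}. Hence \(H_i=I-B_i^\star\) has eigenvalue \(1/(2n)\) on \(\spanop\{b_i\}\) and \(1\) on \(b_i^\perp\). Because \(\nabla f_i(x)=H_ix\) with \(H_i\) symmetric, the gradient Lipschitz constant is \(\lambda_{\max}(H_i)=1\). The strong convexity parameter is \(\lambda_{\min}(H_i)=1/(2n)>0\). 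Positive definiteness of \(H_i\) makes \(0\) the unique minimizer of each \(f_i\), and also of their average \(f\).

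\textbf{The two-step iterates.} With stepsize \(1\), one step gives \(x_1=(I-H_i)x_0=B_i^\star e\), and the next gives \(x_2=B_j^\star B_i^\star e\). Averaging over the \(n(n-1)\) ordered distinct pairs yields
\[
 \E_{\rm distinct}x_2=\frac{1}{n(n-1)}\Qop(B^\star)e.
\]
By \eqref{eq:canonicalQ}, \(\Qop(B^\star)e=-\cn e\), so the mean is \(-\cn e/(n(n-1))=-e/(4n^2)\).

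For independent uniform indices, the average over all \(n^2\) ordered pairs factors:
\[
 \frac1{n^2}\sum_{i,j}B_j^\star B_i^\star=\frac1{n^2}(S^\star)^2.
\]
By \eqref{eq:canonicalS}, \((S^\star)^2e=\tfrac14e\). This gives \(\E_{\rm independent}x_2=e/(4n^2)\), which is \eqref{eq:independentiterate}.

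\textbf{Optimality of the update matrix.} For a general family, the distinct-pair mean update matrix is \(\Qop(B)/(n(n-1))\), where \(B_i=I-H_i\) are the per-step update matrices. Theorem~\ref{thm:main} shows its smallest eigenvalue is at least \(-\cn/(n(n-1))=-1/(4n^2)\) whenever \(B_i\succeq0\) and \(\sum_iB_i\preceq I\). The canonical family attains this value on \(e\).

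The only step needing care is this last one. The feasibility class must be read in terms of the update matrices \(B_i=I-H_i\), not the Hessians \(H_i\), so that Theorem~\ref{thm:main} applies verbatim. Everything else is routine substitution into earlier results.
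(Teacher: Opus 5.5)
Your proposal is correct and follows the paper's proof essentially step for step: you use the spectral description of \(H_i=I-B_i^\star\) for the regularity claims, the identification \(x_2=B_j^\star B_i^\star e\) with mean update operators \(\Qop(B^\star)/(n(n-1))\) and \((S^\star)^2/n^2\), and the rescaled bound \(-\cn/(n(n-1))=-1/(4n^2)\) from Theorem~\ref{thm:main} for optimality. The only cosmetic difference is that you apply these operators directly to \(e\), whereas the paper first writes them out in full as \(\frac1{2n^2(n-1)}(I-P)-\frac1{4n^2}P\) and \(\frac1{n^2}(I-P)+\frac1{4n^2}P\).
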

\begin{proof}
A rank-one matrix \(B_i^\star=b_i b_i^T\) has eigenvalue
\(\norm{b_i}^2=1-1/(2n)\) on \(b_i\), and eigenvalue zero
on \(b_i^\perp\).
Thus \(H_i\) has eigenvalue \(1/(2n)\) on \(b_i\) and eigenvalue \(1\)
on \(b_i^\perp\).
For every \(y\),
\[
 \frac1{2n}\norm{y}^2\le y^TH_i y\le\norm{y}^2.
\]
Since \(\nabla f_i(x)=H_i x\), the spectral upper bound gives
\(\norm{\nabla f_i(x)-\nabla f_i(z)}\le\norm{x-z}\).
The exact quadratic identity
\[
 f_i(z)-f_i(x)-\nabla f_i(x)^T(z-x)
 =\frac12(z-x)^TH_i(z-x)
 \ge\frac1{4n}\norm{z-x}^2
\]
proves strong convexity with parameter \(1/(2n)\).
Positive definiteness of \(H_i\) makes \(0\) its unique minimizer.
The same minimizer is shared by the average \(f\).

The gradient update is \(x\mapsto(I-H_i)x=B_i^\star x\).
Hence \(x_2=B_j^\star B_i^\star x_0\).
For uniform distinct indices, its mean update operator is
\begin{align}
 M_{\rm distinct}
 &=\frac1{n(n-1)}\Qop(B^\star)\notag\\
 &=\frac1{2n^2(n-1)}(I-P)-\frac1{4n^2}P.
 \label{eq:meanmatrix}
\end{align}
Applying it to \(e\), where \(Pe=e\), proves
\eqref{eq:distinctiterate}.
For independent indices, distributivity gives
\[
 M_{\rm independent}
 =\frac1{n^2}\sum_{i,j}B_j^\star B_i^\star
 =\frac1{n^2}(S^\star)^2
 =\frac1{n^2}(I-P)+\frac1{4n^2}P.
\]
This proves \eqref{eq:independentiterate}.

Finally, for any feasible update family,
Theorem~\ref{thm:main} gives
\[
 \frac1{n(n-1)}\Qop(B)\succeq
 -\frac{\cn}{n(n-1)}I=-\frac1{4n^2}I.
\]
Equation~\eqref{eq:meanmatrix} attains that lower eigenvalue,
which proves the extremal assertion.
\end{proof}

The negative eigenvalue records a reversal of direction in the mean
iterate. It is not a negative objective value: every \(f_i\) and \(f\)
is nonnegative. The distinction can also be checked directly on the
individual trajectories.

\begin{proposition}[Individual distinct-pair trajectories]
\label{prop:trajectory}
For the initialization and objectives in
Proposition~\ref{prop:optimization}, every ordered distinct pair satisfies
\[
 x_2=-\frac{b_j}{2n\sqrt{2n}},\qquad
 \norm{x_2}^2=\frac{2n-1}{16n^4}>0.
\]
In particular,
\[
 \E_{\rm distinct}\norm{x_2}^2
 -\norm{\E_{\rm distinct}x_2}^2
 =\frac{n-1}{8n^4}>0.
\]
\end{proposition}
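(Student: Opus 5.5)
The plan is a direct computation that reuses the rank-one structure recorded in Proposition~\ref{prop:canonical}. As in the proof of Proposition~\ref{prop:optimization}, each component step with stepsize \(1\) acts as \(x\mapsto(I-H_i)x=B_i^\star x\). Hence for an ordered distinct pair \((i,j)\) we have \(x_2=B_j^\star B_i^\star e\), and I will evaluate this product one factor at a time.

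First, apply \(B_i^\star=b_ib_i^T\) to \(e\). The proof of Theorem~\ref{thm:rigidity} already records \(\ip{b_i}{e}=1/\sqrt{2n}\), which follows from \(Ce=e/\sqrt2\). So \(x_1=B_i^\star e=b_i/\sqrt{2n}\). Next, apply \(B_j^\star=b_jb_j^T\) to this vector. The Gram relation \eqref{eq:bgram} gives \(\ip{b_j}{b_i}=-1/(2n)\) for \(j\ne i\), so
\[
 x_2=\frac{1}{\sqrt{2n}}\Bigl(-\frac1{2n}\Bigr)b_j=-\frac{b_j}{2n\sqrt{2n}}.
\]
Using \(\norm{b_j}^2=1-1/(2n)=(2n-1)/(2n)\) from \eqref{eq:bgram}, the squared norm is
\[
 \norm{x_2}^2=\frac{2n-1}{2n}\cdot\frac{1}{8n^3}=\frac{2n-1}{16n^4},
\]
which is positive because \(n\ge2\).

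For the variance statement, note that \(\norm{x_2}^2\) does not depend on the pair \((i,j)\). Its expectation over uniform distinct pairs is therefore \((2n-1)/(16n^4)\). Equation~\eqref{eq:distinctiterate} gives \(\E_{\rm distinct}x_2=-e/(4n^2)\), so \(\norm{\E_{\rm distinct}x_2}^2=1/(16n^4)\). Subtracting yields \((2n-2)/(16n^4)=(n-1)/(8n^4)\), which is positive for \(n\ge2\).

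As a consistency check, I could also average \(x_2\) directly. Over distinct pairs, each index \(j\) occurs as the second index \(n-1\) times, so the mean is \(-\frac{1}{n}\sum_j b_j/(2n\sqrt{2n})\). Since \(\sum_j b_j=C\one=\sqrt n\,e/\sqrt2\), this equals \(-e/(4n^2)\), which matches \eqref{eq:distinctiterate}.

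There is no real obstacle here. The only point needing care is the sign and value of the cross inner product \(\ip{b_j}{b_i}\) for \(i\ne j\), which is where the negative direction in the second step comes from.
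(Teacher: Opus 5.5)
Your proposal is correct and matches the paper's proof essentially step for step. Both compute \(x_2=B_j^\star B_i^\star e\) using \(\ip{b_i}{e}=1/\sqrt{2n}\) and \(\ip{b_j}{b_i}=-1/(2n)\), then observe that \(\norm{x_2}^2\) does not depend on the pair, and finally subtract \(\norm{\E_{\rm distinct}x_2}^2=1/(16n^4)\). Your direct re-averaging of \(x_2\) is a harmless extra consistency check that the paper omits.
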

\begin{proof}
From \(\ip{b_i}{e}=1/\sqrt{2n}\) we have
\(B_i^\star e=b_i/\sqrt{2n}\).
For \(i\ne j\), equation~\eqref{eq:bgram} gives
\(\ip{b_j}{b_i}=-1/(2n)\).
Therefore
\[
 B_j^\star B_i^\star e
 =\frac1{\sqrt{2n}}b_j(b_j^Tb_i)
 =-\frac{b_j}{2n\sqrt{2n}}.
\]
Its squared norm is
\[
 \frac{\norm{b_j}^2}{8n^3}
 =\frac{1-1/(2n)}{8n^3}
 =\frac{2n-1}{16n^4}.
\]
This value is independent of the distinct pair.
On the other hand, \eqref{eq:distinctiterate} has squared norm
\(1/(16n^4)\).
Subtracting gives
\((2n-2)/(16n^4)=(n-1)/(8n^4)\).
\end{proof}

These identities concern a specified two-step statistic on an explicit
family. They provide a direct optimization realization of the sharp
matrix endpoint. Expected objective values and behavior over additional
epochs are different functionals of the random products.

\section{Conclusion}
\label{sec:conclusion}
The explicit rank-one tuple in \eqref{eq:canonical} attains
\(-n(n-1)/4\) in the normalization
\(A_i\succeq0\), \(\sum_i A_i\preceq nI\), for every \(n\ge2\).
It therefore proves the sharpness statement posed after Theorem~4 of
\citet{lai2020}.
The equality classification shows that the same tuple is forced on
an \(n\)-dimensional common reducing subspace of every extremizer.
This establishes minimum dimension, uniqueness of the canonical block
up to orthogonal equivalence, and nonattainment by positive definite
matrices.

The fixed-dimension optimum below dimension \(n\) remains an additional
question: compactness proves a strict gap but does not determine its
value. A quantitative stability version of the rigidity theorem is
another natural direction, asking whether a near-extremal eigenvalue
forces an approximately canonical block.
The present results concern the exact two-factor endpoint; these
further questions require additional arguments.

\section{Disclosure}
\label{sec:disclosure}
The proof strategy is produced by OpenAI’s GPT-6 Astra Ultra through Codex in
response to prompts from the author. Codex was also used to revise
the exposition and prepare the LaTeX manuscript. The author selected the
problem, directed the interactions and revisions, and is the sole named author. The AI system is acknowledged as a reasoning and writing tool, not
as an author. This disclosure is not a substitute for independent expert
mathematical review.

\bibliographystyle{plainnat}
\bibliography{references}

\appendix
\section{The regular-simplex interpretation}
\label{sec:simplex}
The basis-compression formula has an equivalent geometric description.
For \(1\le i\le n\), define
\[
 q_i=e_i-\frac1{\sqrt n}e,\qquad
 u_i=\sqrt{\frac n{n-1}}\,q_i.
\]
Then \(e^Tq_i=0\), because
\(e^Te_i=1/\sqrt n\) and \(e^Te=1\).
Also
\[
 \sum_iq_i=\one-\sqrt n\,e=0,\qquad
 \ip{q_i}{q_j}=\delta_{ij}-\frac1n.
\]
Consequently,
\[
 \norm{u_i}=1,\qquad
 \ip{u_i}{u_j}=-\frac1{n-1}\quad(i\ne j).
\]
These are the vertices of a centered regular simplex in \(e^\perp\).
The outer-product sum is
\begin{align*}
 \sum_iq_iq_i^T
 &=\sum_i e_ie_i^T
   -\frac1{\sqrt n}\sum_i(e_i e^T+e e_i^T)
   +\sum_i\frac1n ee^T\\
 &=I-2P+P=I-P,
\end{align*}
and hence \(\sum_i u_i u_i^T=\frac n{n-1}(I-P)\).

The canonical vectors satisfy
\[
 b_i=e_i-\left(1-\frac1{\sqrt2}\right)\frac e{\sqrt n}
     =q_i+\frac e{\sqrt{2n}}.
\]
In the original normalization, \(v_i=\sqrt n\,b_i\) therefore becomes
\[
 v_i=\frac e{\sqrt2}+\sqrt{n-1}\,u_i,\qquad
 A_i=v_i v_i^T.
\]
The simplex identities yield
\begin{align*}
 \sum_i v_i v_i^T
 &=\frac n2P
   +\sqrt{\frac{n-1}{2}}
       \left(e\sum_i u_i^T+\left(\sum_i u_i\right)e^T\right)
   +(n-1)\sum_i u_i u_i^T\\
 &=\frac n2P+n(I-P).
\end{align*}
Moreover,
\(\norm{v_i}^2=1/2+(n-1)=n-1/2\).
Thus
\[
 \sum_i A_i^2=\left(n-\frac12\right)\sum_i A_i,
\]
recovering the product-sum formula by direct geometric computation.

\section{Exact verification and its scope}
\label{sec:verification}
The accompanying file \texttt{verify\_exact.py} checks the construction
using exact symbolic arithmetic. It has two parts.
First it verifies scalar coefficient identities with \(n\) left as
a symbolic positive integer.
Second it constructs the matrices explicitly for \(n=2,\ldots,8\),
forms all \(n(n-1)\) ordered products, and compares the result
entry by entry with the claimed formula.
The finite checks include the Gram matrix
\eqref{eq:wgram}, the action \eqref{eq:action}, and the slack
decomposition \eqref{eq:slack}.
All 15 symbolic identities and all 16 check groups for each tested
dimension passed.

The core algebra can also be checked with a single formal projection
\(P\), using only \(P^2=P\). For
\(a=1-1/\sqrt2\) and \(S=I-P/2\), the needed identities are
\[
 2a-a^2=\frac12,\qquad
 S^2=I-\frac34P,
\]
and
\begin{align*}
 S^2-\left(1-\frac1{2n}\right)S
 &=I-\frac34P
   -\left(1-\frac1{2n}\right)\left(I-\frac12P\right)\\
 &=\frac1{2n}I-\left(\frac14+\frac1{4n}\right)P\\
 &=\frac1{2n}(I-P)-\frac{n-1}{4n}P.
\end{align*}
This is an identity for arbitrary \(n\), not a numerical fit.

Positive semidefiniteness of the construction is certified structurally:
\(B_i^\star=b_i b_i^T\) is an outer product, and
\(I-\sum_iB_i^\star=P/2\) is a nonnegative multiple of a projection.
The verifier checks these identities exactly; it does not infer
positive semidefiniteness from approximate eigenvalues.

\begin{table}[htbp]
 \centering
 \caption{Exact endpoint values for representative values of \(n\).
 The second column is the normalized endpoint and the third uses
 \(A_i=nB_i\). The all-\(n\) result is proved in the main text.}
 \begin{tabular}{ccc}
  \toprule
  \(n\)&\(\lambda_{\min}(\Qop(B^\star))\)&
  \(\lambda_{\min}(\sum_{i\ne j}A_iA_j)\)\\
  \midrule
  2&\(-1/8\)&\(-1/2\)\\
  3&\(-1/6\)&\(-3/2\)\\
  4&\(-3/16\)&\(-3\)\\
  5&\(-1/5\)&\(-5\)\\
  6&\(-5/24\)&\(-15/2\)\\
  7&\(-3/14\)&\(-21/2\)\\
  8&\(-7/32\)&\(-14\)\\
  \bottomrule
 \end{tabular}
\end{table}

These checks are supplementary safeguards against transcription and
normalization errors.
The proofs for every \(n\), every feasible input family, and the
classification of arbitrary equality cases are the arguments in
Sections~\ref{sec:bound}--\ref{sec:rigidity}.
The computation is not a formal proof-assistant certificate and
does not verify bibliographic priority.

\end{document}